\theoremstyle{plain}
\newtheorem{theorem}{Theorem}[section]
\newtheorem{lemma}[theorem]{Lemma}
\theoremstyle{definition}
\newtheorem{definition}[theorem]{Definition}
\theoremstyle{remark}
\newtheorem{remark}[theorem]{Remark}
\newcommand{\A}{(\mathcal{A})}
\numberwithin{equation}{section}
\begin{document}

\title[A Meyniel-type condition for bipancyclicity in bipartite digraphs]{A Meyniel-type condition for bipancyclicity in balanced bipartite digraphs}
\author{Janusz Adamus}
\address{J.Adamus, Department of Mathematics, The University of Western Ontario, London, Ontario N6A 5B7 Canada}
\email{jadamus@uwo.ca}
\thanks{The research was partially supported by Natural Sciences and Engineering Research Council of Canada.}
\subjclass[2010]{05C20, 05C38, 05C45}
\keywords{digraph, bipartite digraph, cycle, bipancyclicity, degree condition}

\begin{abstract}
We prove that a strongly connected balanced bipartite digraph $D$ of order $2a$, $a\geq3$, satisfying $d(u)+d(v)\geq 3a$ for every pair of vertices $u,v$ with a common in-neighbour or a common out-neighbour, is either bipancyclic or a directed cycle of length $2a$.
\end{abstract}
\maketitle


\section{Introduction}
\label{sec:intro}

Recently, there has been a renewed interest in various Meyniel-type conditions for hamiltonicity in bipartite digraphs (see, e.g., \cite{A, AAY, DK, W}). In particular, in \cite{A}, we proved the following bipartite variant of a conjecture of Bang-Jensen et al. \cite{BGL}. (For details on terminology and notation, see Section~\ref{sec:not}.)

\begin{theorem}[{cf. \cite[Thm.\,1]{A}}]
\label{thm:A}
Let $D$ be a strongly connected balanced bipartite digraph with partite sets of cardinalities $a$, where $a\geq3$. If
\[
d(u)+d(v)\geq 3a
\]
for every pair of vertices $u,v\in V(D)$ with a common in-neighbour or a common out-neighbour, then $D$ is hamiltonian.
\end{theorem}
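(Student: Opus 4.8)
The plan is to argue by contradiction via a longest cycle. Suppose $D$ is strongly connected, balanced bipartite with parts $X,Y$ of cardinality $a$, satisfies the stated degree hypothesis, yet has no Hamiltonian cycle. Since $D$ is strong it contains a cycle, and since $D$ is bipartite every cycle has even length; let $C=x_1y_1x_2y_2\cdots x_my_mx_1$ (with $x_i\in X$, $y_i\in Y$, indices mod $m$) be a \emph{longest} cycle, necessarily of length $2m$ with $m\le a-1$. Put $R=V(D)\setminus V(C)$ and split it as $R_X=R\cap X$, $R_Y=R\cap Y$; both are nonempty since $|R_X|=|R_Y|=a-m\ge 1$.

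First I would record the non-insertability constraints forced by the maximality of $C$. Because $C$ alternates between $X$ and $Y$, parity forbids splicing a single external vertex onto an arc of $C$; the relevant extension move inserts a pair $r\in R_Y$, $r'\in R_X$ across a forward arc $x_j\to y_j$ via $x_j\to r\to r'\to y_j$, or symmetrically a pair across a backward arc $y_j\to x_{j+1}$. Maximality of $C$ therefore forbids every such configuration, and I would catalogue these forbidden triples of arcs precisely; more generally I would rule out any $R$-path joining two vertices of $C$ that bypasses a strictly shorter segment of $C$.

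Next I would harvest common-neighbour pairs from strong connectivity. As $D$ is strong, there is at least one arc from $C$ into $R$ and at least one from $R$ into $C$. If $x_j\to u$ with $u\in R_Y$, then $x_j$ is a common in-neighbour of $u$ and $y_j$, so the hypothesis yields $d(u)+d(y_j)\ge 3a$; dually an arc $u\to y_j$ makes $y_j$ a common out-neighbour of $u$ and $x_j$, giving $d(u)+d(x_j)\ge 3a$. Iterating this over all arcs between $C$ and $R$ forces large total degree on the $R$-vertices and on their $C$-neighbours. Since $|R|=2(a-m)$ is small and $u$ can have at most $a-m$ neighbours in each direction inside $R$, a large value of $d(u)$ forces $u$ to send many arcs to $V(C)$ in at least one direction; I would then play these surplus arcs against the forbidden insertion patterns of the previous step.

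The main obstacle, and the heart of the argument, is this final counting: converting the \emph{local} degree bound—which applies only to pairs sharing a common neighbour, not to all pairs—into a global contradiction. Two points are delicate: (i) one must guarantee that a common neighbour genuinely exists before invoking the $3a$ bound, which ties the choice of pairs tightly to the arcs produced by strong connectivity; and (ii) one must show that the surplus of arcs between $R$ and $C$ forced by these bounds cannot avoid completing a forbidden insertion, while respecting the bipartite alternation throughout. I expect to need a pigeonhole argument on the consecutive arcs of $C$: once an $R$-pair $r\in R_Y,\ r'\in R_X$ collectively sends enough arcs into $C$ in both directions, two of them must land at the two ends of a single forward arc $x_j\to y_j$, yielding the insertable path $x_j\to r\to r'\to y_j$ and a cycle longer than $C$, the desired contradiction. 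The degenerate boundary case $m=a-1$ (where $R$ consists of exactly one vertex in each part) and the case where $R$ carries very few internal arcs would likely be isolated and settled by a smaller direct argument.
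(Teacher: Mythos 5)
First, a point of order: this paper does not prove Theorem~\ref{thm:A} at all --- it is imported from \cite{A}, where it is the main result of a separate nine-page paper, and is used here as a black box to produce the hamiltonian cycle $C$. So your proposal must be measured against the proof in \cite{A}, which does belong to the same family you chose (longest cycle, non-insertability, counting arcs between $C$ and $R$, as in \cite{AAY}). Your harvesting step is sound: if $x_j\to u$ with $u\in R$, then $x_j$ is a common in-neighbour of $u$ and $y_j$, so $d(u)+d(y_j)\geq 3a$, and dually. But the proposal stops exactly where the real proof begins, and the two steps you defer are not routine. (i) The hypothesis bounds only the \emph{sum} $d(u)+d(y_j)$; it does not force $d(u)$ to be large, since the surplus can sit entirely on the $C$-vertex: $d(y_j)$ can be as large as $2a$, leaving only $d(u)\geq a$. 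Since $u$ can have up to $2(a-m)$ neighbours inside $R$, this yields merely $d_C(u)\geq 2m-a$, which is vacuous whenever the longest cycle has $m\leq a/2$. So ``large total degree on the $R$-vertices'' and hence ``many arcs into $C$'' are simply not forced, and the pigeonhole you expect to finish with has nothing to count in the regime of short longest cycles.

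(ii) Your one concrete extension move --- inserting a pair across an arc via $x_j\to r\to r'\to y_j$ --- requires an internal arc $r\to r'$ from $R_Y$ to $R_X$, and nothing in the hypothesis guarantees that any arc between $R_X$ and $R_Y$ exists. If $R$ induces no arcs, the forbidden configuration can never be completed no matter how many arcs join $R$ to $C$, and the final contradiction evaporates; at that point you must develop the ``replace a segment of $C$ by a longer path through $R$'' moves that you mention only in passing. This is precisely where the argument of \cite{A} does its work: one needs lemmas bounding $d_C(u)$, and $d_C(u)+d_C(v)$ for suitable pairs, of non-insertable vertices, combined with the structure (in particular strong components) of $D-V(C)$, before any counting closes. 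Together with the explicitly deferred boundary case $m=a-1$, what you have is a correct opening and a plausible plan, but the ``main obstacle'' you name in your own text --- converting the local, pair-restricted bound into a global contradiction --- is the theorem, and it remains unproved in your write-up.
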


In \cite{DK}, the authors suggested that, modulo some exceptional digraphs, the hypotheses of Theorem~\ref{thm:A} should, in fact, imply bipancyclicity of $D$. In the present note we prove that this is indeed the case.

First, it will be useful to introduce the following shorthand notation from \cite{A}.

\begin{definition}
\label{def:3a}
Let $D$ be a balanced bipartite digraph with partite sets of cardinalities $a$. We will say that $D$ satisfies \emph{condition $\A$} when
\[
d(u)+d(v)\geq 3a
\]
for every pair of vertices $u,v$ with a common in-neighbour or a common out-neighbour.
\end{definition}

\begin{theorem}
\label{thm:main}
Let $D$ be a strongly connected balanced bipartite digraph with partite sets of cardinalities $a$, where $a\geq3$. If $D$ satisfies condition $\A$, then $D$ is either bipancyclic or a directed cycle of length $2a$.
\end{theorem}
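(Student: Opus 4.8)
The plan is to reduce the statement to a chord-counting problem on a fixed Hamilton cycle. Since $D$ is strongly connected and satisfies condition $\A$, Theorem~\ref{thm:A} provides a Hamilton cycle $C = v_0 v_1 \cdots v_{2a-1} v_0$, whose arcs alternate between the two partite sets. If $C$ carries all of the arcs of $D$, then $D$ is precisely the directed cycle of length $2a$ and we are in the second alternative; so I would assume from now on that $D$ has at least one chord, and aim to produce a directed cycle of every even length $4, 6, \ldots, 2a-2$, the length $2a$ being realized by $C$ itself.

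The key elementary observation is the correspondence between chords of $C$ and cycle lengths. A chord $v_i \to v_j$ must join vertices in opposite partite sets, so $j-i$ is odd; writing $\ell = (j-i) \bmod 2a$, the chord together with the arc-path $v_j \to v_{j+1} \to \cdots \to v_i$ closes a directed cycle of length $2a + 1 - \ell$. As $\ell$ ranges over the odd values $3, 5, \ldots, 2a-1$, these lengths range over all of $2a-2, 2a-4, \ldots, 2$. Thus it would suffice to exhibit, for each required even length, a chord of the matching displacement; more flexibly, since two or more chords can be spliced together along $C$, a cycle of a prescribed even length can instead be assembled from several chords of smaller displacement. In this way bipancyclicity reduces to showing that the chords forced by condition $\A$ collectively realize every intermediate even length.

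The engine is condition $\A$ itself. Because $D$ strictly contains $C$, the pigeonhole principle gives a vertex $u$ of out-degree at least $2$ (or, dually, in-degree at least $2$); its two out-neighbours $w_1, w_2$ share the common in-neighbour $u$, so $d(w_1) + d(w_2) \ge 3a$. As each semidegree is at most $a$, this forces $w_1$ and $w_2$ to have large neighbourhoods, and by recording the positions of these neighbours along $C$ I would extract chords of many distinct displacements. The point is then to iterate: every chord just found creates fresh pairs of vertices with a common neighbour, to which $\A$ again applies, yielding further high-degree vertices and hence further chords. I would aim to show that this process cannot terminate until chords realizing every even length $4, \ldots, 2a-2$ are present, so that $D$ is bipancyclic.

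The main obstacle, and the technical heart of the argument, is the extremal analysis needed to guarantee that no even length is skipped. A single high-degree pair yields chords, but controlling their displacements — and in particular ruling out configurations in which all chords happen to cluster at displacements that leave a gap in the attainable lengths — requires a careful case analysis of how the neighbourhoods of $w_1, w_2$ sit relative to $C$, together with the parity constraints of the bipartite structure. I expect the delicate point to be the borderline case where condition $\A$ holds with near-equality: there one must show that the degree surplus is nonetheless enough to close a cycle of the missing length, and that the only way to avoid every shorter cycle simultaneously is for $D$ to collapse to the directed cycle of length $2a$. The smallest orders (notably $a=3$) and the base case $C_4$ would be verified directly.
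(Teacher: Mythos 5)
Your proposal correctly sets up the frame (Theorem~\ref{thm:A} supplies the hamiltonian cycle $C$; a chord of odd displacement $\ell$ closes a cycle of even length $2a+1-\ell$), but the entire combinatorial content of the theorem lies in the step you defer: showing that the chords forced by condition $\A$ realize \emph{every} even length, with no gaps. Condition $\A$ bounds only the degree \emph{sums} of pairs with a common neighbour; it says nothing directly about the \emph{positions} of those neighbours along $C$, and a pair of degree sum $3a$ could a priori have all of its chords clustered at a few displacements. Your proposed remedy --- splicing several chords of smaller displacement --- is itself nontrivial (the chord-jumps and the intervening arcs of $C$ must concatenate into a single cycle of the prescribed length, which imposes compatibility conditions on the chords used), and your iteration (``every chord just found creates fresh pairs\dots'') comes with no invariant or termination argument showing the process cannot stall before all lengths $2,4,\dots,2a-2$ are covered. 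The phrases ``I would aim to show'' and ``I expect the delicate point to be'' mark exactly where the proof is absent: what you have is a plan, not an argument, and the extremal near-equality configurations you flag as delicate are precisely the ones that defeat naive chord-counting.

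The paper sidesteps this difficulty by a different reduction: it contracts each pair $x_i,y_i$ (consecutive on $C$) to a single vertex, producing auxiliary digraphs $G_1$ and $G_2$ of order $a$ in which every $l$-cycle lifts to a $2l$-cycle of $D$, verifies (Lemma~\ref{lem:4}) that $G_1$ satisfies the Ore-type hypothesis $d(u)+d(v)\geq 2a$ of Thomassen's pancyclicity theorem, and lets Theorem~\ref{thm:thomassen} do exactly the ``no even length is skipped'' work that your sketch leaves open; the exceptional outcomes of that theorem (tournament, $K^*_{\frac{a}{2},\frac{a}{2}}$) are then eliminated by the degree bookkeeping in \eqref{eq:a-1}--\eqref{eq:key_again}, ending in an explicit $a=4$ configuration, while $2$-cycles are handled separately by Lemma~\ref{lem:3}. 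To salvage your chord-based approach you would in effect have to reprove a bipartite analogue of Thomassen's theorem from scratch; importing it via the contraction $D\mapsto G_1$ is the missing idea.
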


\begin{remark}
\label{rem:sharp}
The bound in Theorem~\ref{thm:main} is sharp, since there exist strongly connected balanced bipartite digraphs satisfying $d(u)+d(v)\geq3a-1$ for every pair of vertices $u,v$ with a common in-neighbour or a common out-neighbour, that nonetheless do no contain a hamiltonian cycle (see, e.g., \cite[Ex.\,1.12]{AAY}).
On the other hand, it is natural to ask if for every $1\leq l<a$ there is a $k\geq1$ such that every strongly connected balanced bipartite digraph on $2a$ vertices contains cycles of all even lengths up to $2l$, provided $d(u)+d(v)\geq3a-k$ for every pair of vertices $u,v$ as above. We don't know the answer to this question.
\end{remark}

\medskip

\section{Notation and terminology}
\label{sec:not}

We consider digraphs in the sense of \cite{BG}: A \emph{digraph} $D$ is a pair $(V(D),A(D))$, where $V(D)$ is a finite set (of \emph{vertices}) and $A(D)$ is a set of ordered pairs of distinct elements of $V(D)$, called \emph{arcs} (i.e., $D$ has no loops or multiple arcs).

The number of vertices $|V(D)|$ is the \emph{order} of $D$ (also denoted by $|D|$). For vertices $u$ and $v$ from $V(D)$, we write $uv\in A(D)$ to say that $A(D)$ contains the ordered pair $(u,v)$. If $uv\in A(D)$, then $u$ is called an \emph{in-neighbour} of $v$, and $v$ is an \emph{out-neighbour} of $u$.

For a vertex set $S \subset V(D)$, we denote by $N^+(S)$ the set of vertices in $V(D)$ \emph{dominated} by the vertices of $S$; i.e.,
\[
N^+(S)=\{u\in V(D): vu\in A(D)\text{\ for\ some\ }v\in S\}\,.
\]
Similarly, $N^-(S)$ denotes the set of vertices of $V(D)$ \emph{dominating} vertices of $S$; i.e,
\[
N^-(S)=\{u\in V(D): uv\in A(D)\text{\ for\ some\ }v\in S\}\,.
\]
If $S=\{v\}$ is a single vertex, the cardinality of $N^+(\{v\})$ (resp. $N^-(\{v\})$), denoted by $d^+(v)$ (resp. $d^-(v)$) is called the
\emph{outdegree} (resp. \emph{indegree}) of $v$ in $D$. The \emph{degree} of $v$ is $d(v)\coloneqq d^+(v)+d^-(v)$.

More generally, for a vertex $v\in V(D)$ and a subdigraph $E$ of $D$, we will denote the cardinality of $N^+(\{v\})\cap V(E)$ by $d^+_E(v)$. Similarly, the cardinality of $N^-(\{v\})\cap V(E)$ will be denoted by $d^-_E(v)$. We set $d_E(v)\coloneqq d^+_E(v)+d^-_E(v)$.

A directed cycle on vertices $v_1,\dots,v_m$ in $D$ is denoted by $[v_1,\ldots,v_m]$. We will refer to it as simply a \emph{cycle} (skipping the term ``directed''), since its non-directed counterpart is not considered in this article at all.
A cycle passing through all the vertices of $D$ is called \emph{hamiltonian}. A digraph containing a hamiltonian cycle is called a \emph{hamiltonian digraph}. A digraph containing cycles of all lengths is called \emph{pancyclic}.

A digraph $D$ is \emph{strongly connected} when, for every pair of vertices $u,v\in V(D)$, $D$ contains a path originating in $u$ and terminating in $v$ and a path originating in $v$ and terminating in $u$. A digraph $D$ in which, for every pair of vertices $u,v\in V(D)$ precisely one of the arcs $uv, vu$ belongs to $A(D)$ is called a \emph{tournament}.

A digraph $D$ is \emph{bipartite} when $V(D)$ is a disjoint union of independent sets $V_1$ and $V_2$ (the \emph{partite sets}).
It is called \emph{balanced} if $|V_1|=|V_2|$. One says that a bipartite digraph $D$ is \emph{complete} when $d(x)=2|V_2|$ for all $x\in V_1$. A complete bipartite digraph with partite sets of cardinalitites $a$ and $b$ will be denoted by $K^*_{a,b}$\,. A balanced bipartite digraph containing cycles of all even lengths is called \emph{bipancyclic}.

\medskip

\section{Lemmas}
\label{sec:lemmas}

The proof of Theorem~\ref{thm:main} will be based on the four lemmas below and the following well-known theorem of Thomassen.

\begin{theorem}[{\cite[Thm.\,3.5]{T}}]
\label{thm:thomassen}
Let $G$ be a strongly connected digraph of order $n$, $n\geq3$, such that $d(u)+d(v)\geq 2n$ whenever $u$ and $v$ are non-adjacent. Then, $G$ is either pancyclic, or a tournament, or $n$ is even and $G$ is isomorphic to $K^*_{\frac{n}{2},\frac{n}{2}}$.
\end{theorem}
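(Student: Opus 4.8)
The plan is to deduce bipancyclicity from the hamiltonicity already guaranteed by Theorem~\ref{thm:A}. By that theorem $D$ contains a hamiltonian cycle $C=[x_1,\dots,x_{2a}]$, so the cycle of length $2a$ is free, and the remaining task is to produce cycles of every even length $2\ell$ with $1\le\ell<a$, or else to recognise the exceptional digraph. The first observation I would record is that $D$ is the directed cycle of length $2a$ precisely when $A(D)=A(C)$, and that in this situation no two vertices share a common in-neighbour or a common out-neighbour, so condition $\A$ holds vacuously and no shorter cycle need exist. Hence from now on I would assume that $D$ carries an arc off $C$, and the goal becomes to build all the shorter even cycles.

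The engine of the argument is the following translation of condition $\A$. An arc not on $C$ forces some vertex $w$ to have indegree or outdegree at least $2$; its two in-neighbours (respectively out-neighbours) then lie in a common partite set and share the common neighbour $w$, so condition $\A$ applies to them and their degrees sum to at least $3a$. Since every vertex has degree at most $2a$, this immediately yields a vertex of degree at least $\tfrac{3a}{2}$, and I would propagate this along $C$ to locate a substantial collection of high-degree vertices, each contributing many chords of $C$. I would isolate the precise consequences I need as the four supporting lemmas: one converting condition $\A$ into a lower bound on the number and spread of chords incident to a prescribed arc of $C$, one that turns a suitable chord into a cycle shorter by exactly $2$, one controlling the degenerate configurations in which the high-degree vertices cluster, and one extracting a balanced, sufficiently dense induced subdigraph.

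With these in hand I would run a two-front construction. For the larger even lengths I would argue by downward induction from $C$: given a cycle of length $2k$ with $k\ge3$, the chord lemma supplies an arc skipping exactly two of its vertices, producing a cycle of length $2k-2$; iterating reaches all even lengths down to the range where the short-chord argument becomes delicate. For the short cycles, and to settle the dense configurations cleanly, I would invoke Theorem~\ref{thm:thomassen}, applied not to $D$ itself---whose sparse non-adjacent pairs never meet the $4a$ threshold---but to the dense balanced subdigraph extracted above (or to the auxiliary digraph whose arcs record directed paths of length $2$ in $D$), where condition $\A$ can be shown to force the required quadratic degree bound. Because that subdigraph is bipartite, Thomassen's tournament alternative is impossible and his complete-bipartite-digraph alternative is itself bipancyclic, so either way it contributes cycles of all small even lengths, which then embed into $D$.

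The main obstacle I anticipate is the interface between the two fronts. The downward shortening is clean only while there is room to find a chord skipping exactly two vertices; near the bottom of the range this can fail, and it fails in exactly the way that produces the directed-cycle exception, so the boundary cases must be analysed directly rather than by the generic chord count. Equally delicate is verifying that condition $\A$ really delivers a balanced subdigraph (or auxiliary digraph) on which Thomassen's hypothesis $d(u)+d(v)\ge 2n$ holds, since degrees can collapse upon passing to a subdigraph; controlling this collapse, and ensuring that the two fronts overlap so that no even length is missed, is where I expect the bulk of the work to lie.
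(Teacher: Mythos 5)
There is a fundamental mismatch here: the statement you were asked to prove is Thomassen's theorem (Theorem~\ref{thm:thomassen}), a result about an \emph{arbitrary} strongly connected digraph $G$ of order $n$ in which $d(u)+d(v)\geq 2n$ for every \emph{non-adjacent} pair $u,v$, concluding that $G$ is pancyclic, a tournament, or $K^*_{\frac{n}{2},\frac{n}{2}}$. Your proposal never engages this statement. From its first sentence onward it is a proof plan for Theorem~\ref{thm:main} of the paper: you argue about bipancyclicity of a balanced bipartite digraph $D$ of order $2a$, about condition $\A$, and about the hamiltonian cycle supplied by Theorem~\ref{thm:A} --- none of which appear in the hypotheses or conclusion of Thomassen's theorem. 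Worse, your plan explicitly proposes to ``invoke Theorem~\ref{thm:thomassen}'' as a tool (applied to a dense subdigraph, or to the auxiliary digraph of length-two paths); as an argument for the statement in question, that is circular.

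For calibration: the paper itself contains no proof of this statement --- it is quoted verbatim from \cite[Thm.\,3.5]{T} and used as a black box in Section~\ref{sec:main-proof}, applied to the auxiliary digraphs $G_1$ and $G_2$. A genuine proof would have to operate in the setting of general digraphs, for instance by first extracting hamiltonicity via a Meyniel-type argument (the condition $d(u)+d(v)\geq 2n$ for non-adjacent pairs is stronger than Meyniel's bound $2n-1$) and then analysing a hamiltonian, non-pancyclic digraph to show that the degree condition forces either all pairs adjacent with no $2$-cycles (the tournament alternative) or the extremal bipartite structure $K^*_{\frac{n}{2},\frac{n}{2}}$. Nothing in your sketch --- chord counts along a bipartite hamiltonian cycle, the $3a$ degree-sum bound, downward induction on even cycle lengths --- touches any of this. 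It is worth noting that, read as an outline of Theorem~\ref{thm:main} instead, parts of your plan do shadow the paper's actual route (your parenthetical auxiliary digraph recording directed paths of length $2$ is exactly the paper's $G_1$, and the paper likewise disposes of the tournament and complete-bipartite alternatives after applying Thomassen's theorem), but that is a different theorem from the one you were asked to prove.
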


Throughout this section we assume that $D$ is a strongly connected balanced bipartite digraph with partite sets of cardinalities $a\geq3$, which satisfies condition $\A$. Further, assume that $C$ is a cycle of length $2a$ in $D$, and
\begin{equation}
\label{eq:a-1}
d^+(u)\leq a-1\quad\mathrm{and}\quad d^-(u)\leq a-1\quad\mathrm{for\ every\ }u\in V(D)\,.
\end{equation}

\begin{lemma}
\label{lem:1}
Suppose that $D$ is not a cycle of length $2a$. Then, for every vertex $u\in V(D)$ there exists a vertex $v\in V(D)\setminus\{u\}$ such that $u$ and $v$ have a common in-neighbour or a common out-neighbour.
\end{lemma}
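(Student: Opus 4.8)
The plan is to prove the contrapositive: I will show that if some vertex $u$ has no partner---no vertex $v\neq u$ sharing a common in-neighbour or a common out-neighbour with it---then $D$ must coincide with the cycle $C$, contradicting the standing assumption that $D$ is not a cycle of length $2a$. So fix such a $u$ and first unpack the hypothesis. If some out-neighbour $w$ of $u$ had a second in-neighbour $v\neq u$, then $u$ and $v$ would share the common out-neighbour $w$; hence every out-neighbour of $u$ has in-degree $1$, and symmetrically every in-neighbour of $u$ has out-degree $1$. Now I bring in the hamiltonian cycle $C$: if $w\in N^+(u)$, then its unique in-neighbour must be its predecessor on $C$, which forces that predecessor to be $u$ and hence $w$ to be the successor of $u$ on $C$. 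Thus $N^+(u)$ is the singleton consisting of the successor of $u$, i.e.\ $d^+(u)=1$, and that successor has in-degree $1$; symmetrically $d^-(u)=1$ and the predecessor of $u$ has out-degree $1$.

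The crux of the argument is the following degree observation, which is where condition $\A$ and \eqref{eq:a-1} combine. I claim that any vertex $w$ with $d(w)\leq a$ can have no partner at all. Indeed, by \eqref{eq:a-1} every vertex $v$ satisfies $d(v)=d^+(v)+d^-(v)\leq 2a-2$, so for any candidate partner $v$ we would have $d(w)+d(v)\leq a+(2a-2)=3a-2<3a$, which by condition $\A$ is impossible for a pair sharing a common neighbour. In particular, any vertex of in-degree $1$ has degree at most $(a-1)+1=a$ by \eqref{eq:a-1}, and so has no partner.

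With these two steps the proof closes by induction around $C$. Writing $C=[x_0,x_1,\ldots,x_{2a-1}]$ with $u=x_0$, the first step gives that the successor $x_1$ has in-degree $1$; the second step then shows $x_1$ itself has no partner, so the first step applies to $x_1$ and yields $d^+(x_1)=d^-(x_1)=1$ together with $d^-(x_2)=1$ for its successor. Iterating, each $x_{i+1}$ inherits in-degree $1$ from the absence of a partner at $x_i$, so every vertex of $C$ acquires both in-degree and out-degree $1$. Hence $A(D)$ consists of exactly the $2a$ arcs of $C$, whence $D=C$, the desired contradiction. The one point requiring care---and the real content of the lemma---is the degree bound of the middle step: it is precisely the interplay of $\A$ with \eqref{eq:a-1} that prevents a low-degree vertex from having a partner, and this is what lets the ``no partner'' property propagate all the way around the cycle rather than stalling after the first vertex.
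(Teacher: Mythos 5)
Your proof is correct and follows essentially the same route as the paper's: both propagate the ``no partner'' property around the hamiltonian cycle $C$ by showing that the successor of a partnerless vertex has in-degree $1$, hence degree at most $a$ by \eqref{eq:a-1}, and hence no partner itself by condition~$\A$ combined with \eqref{eq:a-1}, concluding that $D=C$. The only cosmetic difference is that you bound the successor's out-degree directly by the standing assumption \eqref{eq:a-1}, whereas the paper rederives that bound from the partnerless hypothesis; both are valid.
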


\begin{proof}
For a proof by contradiction, suppose that $D$ contains a vertex $u_0$ which has no common in-neighbour or out-neighbour with any other vertex in $D$. Let $u_0^+$ denote the successor of $u_0$ on $C$. Then, $d^-(u_0^+)=1$, for else $u_0^+$ would be a common out-neighbour of $u_0$ and some other vertex. Similarly, $d^+(u_0^+)\leq a-1$, for else $u_0^+$ would dominate both $u_0^{++}$ and $u_0$ (where $u_0^{++}$ denotes the successor of $u_0^+$ on $C$; note that $a\geq3$ implies $u_0^{++}\neq u_0$). Consequently, $d(u_0^+)\leq a$, and hence any vertex $v$ which would have a common in-neighbour or out-neighbour with $u_0^+$ would need to have $d(v)\geq 2a$, by condition $\A$. Such a vertex $v$, however, would violate our assumption \eqref{eq:a-1}. It thus follows that $u_0^+$ has no common in-neighbour or out-neighbour with any other vertex in $D$.

By repeating the above argument, one can now show that $u_0^{++}$, the successor of $u_0^+$ on $C$ has no common in-neighbour or out-neighbour with any vertex in $V(D)$, and, inductively, that no vertex of $D$ has a common in-neighbour or out-neighbour with any other vertex.
The latter implies that $D=C$ is a cycle of length $2a$, contrary to the hypothesis of the lemma.\end{proof}

\begin{lemma}
\label{lem:2}
Suppose that $D$ is not a cycle of length $2a$. Then, for every two vertices $u,v\in V(D)$ from the same partite set of $D$, $u$ and $v$ have a common in-neighbour or a common out-neighbour.
\end{lemma}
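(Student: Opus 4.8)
The plan is to argue by contradiction, exploiting the fact that two vertices lying in a common partite set have all of their in- and out-neighbours in the \emph{other} partite set, which has only $a$ vertices. Write $V(D)=V_1\cup V_2$ for the partite decomposition, and suppose, for contradiction, that some pair $u,v$ from the same partite set, say $V_1$, have neither a common in-neighbour nor a common out-neighbour.

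First I would translate the absence of common neighbours into a degree bound. Since $D$ is bipartite, $N^+(u)$, $N^+(v)$, $N^-(u)$, $N^-(v)$ are all contained in $V_2$, a set of cardinality $a$. Having no common out-neighbour means $N^+(u)\cap N^+(v)=\emptyset$, so $d^+(u)+d^+(v)=|N^+(u)\cup N^+(v)|\leq|V_2|=a$; likewise, having no common in-neighbour gives $d^-(u)+d^-(v)\leq a$. Adding these yields $d(u)+d(v)\leq 2a$, so at least one of the two vertices, say $u$, satisfies $d(u)\leq a$.

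Next I would invoke Lemma~\ref{lem:1}: since $D$ is not a cycle of length $2a$, the vertex $u$ shares a common in-neighbour or a common out-neighbour with some vertex $w\neq u$. Condition $\A$ then forces $d(u)+d(w)\geq 3a$, whence $d(w)\geq 3a-d(u)\geq 2a$. This contradicts the standing assumption \eqref{eq:a-1}, which gives $d(w)=d^+(w)+d^-(w)\leq 2(a-1)<2a$. The contradiction shows that $u$ and $v$ must in fact have a common in- or out-neighbour; the case $u,v\in V_2$ is handled by the identical argument, since both condition $\A$ and \eqref{eq:a-1} are symmetric in the two partite sets.

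The only point requiring care — and really the crux of the lemma — is the pigeonhole step: it is precisely because $u$ and $v$ lie in the same partite set that their four relevant neighbourhoods are confined to a single set of size $a$, so that disjointness of the out-neighbourhoods and of the in-neighbourhoods caps the total degree at $2a$. Once this is in place, Lemma~\ref{lem:1} together with condition $\A$ collides immediately with the degree ceiling \eqref{eq:a-1}, so I anticipate no further obstacles.
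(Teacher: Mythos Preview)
Your argument is correct and uses precisely the same ingredients as the paper's proof: Lemma~\ref{lem:1}, condition $\A$, the degree ceiling \eqref{eq:a-1}, and the pigeonhole observation that vertices in a common partite set have all neighbours confined to the other side. The only difference is cosmetic ordering: the paper first combines Lemma~\ref{lem:1}, $\A$, and \eqref{eq:a-1} to obtain the universal lower bound $d(u)\geq a+2$ (inequality \eqref{eq:a+1}) and then applies pigeonhole directly, whereas you run the same chain by contradiction---but the logic is identical.
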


\begin{proof}
Observe first that, by \eqref{eq:a-1}, every vertex $w$ of $D$ satisfies $d(w)\leq 2a-2$. Therefore, by Lemma~\ref{lem:1} and condition $\A$, every vertex $u\in V(D)$ satisfies
\begin{equation}
\label{eq:a+1}
d(u)\geq 3a-(2a-2)=a+2\,.
\end{equation}
It follows that, for any two vertices $u,v\in V(D)$, one has
\[
2a+4\leq d(u)+d(v)=(d^-(u)+d^-(v))+(d^+(u)+d^+(v))\,,
\]
and hence $d^-(u)+d^-(v)>a$ or $d^+(u)+d^+(v)>a$. If now $u$ and $v$ belong to the same partite set of $D$, then the first of these inequalities implies that $u$ and $v$ have a common in-neighbour in $D$, while the second one implies that they have a common out-neighbour, as required.
\end{proof}

\begin{lemma}
\label{lem:3}
Suppose that $D$ is not a cycle of length $2a$. Then, every vertex of $D$ lies on a $2$-cycle (i.e., for every $u\in V(D)$ there exists a vertex $v\in V(D)\setminus\{u\}$ such that $uv\in A(D)$ and $vu\in A(D)$).
\end{lemma}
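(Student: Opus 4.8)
The plan is to argue by contradiction, exploiting the fact that in a bipartite digraph every neighbour of a fixed vertex lies in the opposite partite set. Fix $u\in V(D)$ and, without loss of generality, assume $u$ belongs to the partite set $V_1$, so that both $N^+(\{u\})$ and $N^-(\{u\})$ are contained in $V_2$, a set of cardinality $a$. A $2$-cycle through $u$ is precisely a vertex $v$ with $v\in N^+(\{u\})\cap N^-(\{u\})$; thus, saying that $u$ lies on no $2$-cycle is the same as saying that $N^+(\{u\})$ and $N^-(\{u\})$ are disjoint.

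First I would record the degree lower bound already isolated in the proof of Lemma~\ref{lem:2}: combining Lemma~\ref{lem:1} with condition $\A$ and the upper bound $d(w)\leq 2a-2$ coming from \eqref{eq:a-1}, one obtains
\[
d(u)\geq 3a-(2a-2)=a+2
\]
for every vertex $u\in V(D)$; this is exactly \eqref{eq:a+1}. Now suppose, for contradiction, that $u$ lies on no $2$-cycle, so that $N^+(\{u\})\cap N^-(\{u\})=\emptyset$. Then
\[
d(u)=d^+(u)+d^-(u)=|N^+(\{u\})|+|N^-(\{u\})|=|N^+(\{u\})\cup N^-(\{u\})|\leq|V_2|=a,
\]
which contradicts $d(u)\geq a+2$. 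Hence $N^+(\{u\})\cap N^-(\{u\})\neq\emptyset$, and any vertex $v$ in this intersection yields a $2$-cycle $[u,v]$ through $u$, as required.

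As for the main difficulty: there is essentially none beyond setting up the bipartite bookkeeping correctly. The whole argument is driven by the single observation that, since the neighbours of $u$ all live in an $a$-element part, the absence of a $2$-cycle would force $d(u)\leq a$, in direct conflict with the linear lower bound on degrees. The only point requiring a little care is to ensure that the bound $d(u)\geq a+2$ is invoked (or re-derived) correctly, since it is \eqref{eq:a+1}, rather than condition $\A$ by itself, that the argument actually consumes; and I would note that \eqref{eq:a+1} is legitimately available here because it rests on Lemma~\ref{lem:1}, whose hypothesis (that $D$ is not a cycle of length $2a$) coincides with the standing hypothesis of the present lemma.
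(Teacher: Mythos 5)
Your argument is correct and is essentially the paper's own proof: both invoke the bound $d(u)\geq a+2$ from \eqref{eq:a+1} and conclude by pigeonhole that $N^+(\{u\})$ and $N^-(\{u\})$, both contained in the opposite partite set of size $a$, must intersect. You merely spell out the pigeonhole step as an explicit contradiction, which the paper leaves implicit.
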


\begin{proof}
By \eqref{eq:a+1}, for every $u\in V(D)$, we have $d^+(u)+d^-(u)>a$, and hence $N^+(\{u\})\cap N^-(\{u\})\neq\varnothing$.
\end{proof}

From now on, we are going to denote the two partite sets of $D$ by $X$ and $Y$, with elements $\{x_1,\dots,x_a\}$ and $\{y_1,\dots,y_a\}$ respectively, ordered so that $C$ is the cycle $[y_1,x_1,\dots,y_a,x_a]$.

We will associate with $D$ two new digraphs, $G_1$ and $G_2$, constructed as follows. Set $V(G_1)\coloneqq\{v_1,\dots,v_a\}$, and $v_iv_j\in A(G_1)$ whenever $x_iy_j\in A(D)$, for $i,j\in\{1,\dots,a\}$, $i\neq j$. Similarly, set $V(G_2)\coloneqq\{w_1,\dots,w_a\}$, and $w_iw_j\in A(G_2)$ whenever $y_ix_j\in A(D)$, for $i,j\in\{1,\dots,a\}$, $i\neq j$. Note that $a\geq3$, so $G_1$ and $G_2$ have at least three vertices each. Moreover, for every $1\leq i\leq a$, we have
\begin{align}
\label{eq:D-to-G}
d^+_{G_1}(v_i)&\geq d^+_D(x_i)-1,\quad d^-_{G_1}(v_i)\geq d^-_D(y_i)-1,\quad\mathrm{and}\\
\notag
d^+_{G_2}(w_i)&\geq d^+_D(y_i)-1,\quad d^-_{G_2}(w_i)\geq d^-_D(x_i)-1\,.
\end{align}

\begin{lemma}
\label{lem:4}
Suppose that $D$ is not a cycle of length $2a$. Then, for any two vertices $v_i,v_j$ in $G_1$ and for any two vertices $w_i,w_j$ in $G_2$, we have
$d_{G_1}(v_i)+d_{G_1}(v_j)\geq 2a$ \ and \ $d_{G_2}(w_i)+d_{G_2}(w_j)\geq 2a$\,.
\end{lemma}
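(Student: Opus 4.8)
The plan is to reduce each of the two asserted inequalities to a statement about degrees in $D$, and then exploit condition $\A$ together with the upper bounds \eqref{eq:a-1}. I would begin with $G_1$. By the first line of \eqref{eq:D-to-G}, for any index $i$ we have
\[
d_{G_1}(v_i) = d^+_{G_1}(v_i) + d^-_{G_1}(v_i) \geq d^+_D(x_i) + d^-_D(y_i) - 2,
\]
so for distinct $i,j$ it suffices to prove that
\[
d^+_D(x_i) + d^+_D(x_j) + d^-_D(y_i) + d^-_D(y_j) \geq 2a + 4.
\]

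The key step is to produce a lower bound of $6a$ on the full degree sum $d(x_i) + d(x_j) + d(y_i) + d(y_j)$ and then discard the unwanted terms. Since $x_i$ and $x_j$ lie in the same partite set $X$, Lemma~\ref{lem:2} guarantees that they share a common in-neighbour or a common out-neighbour, whence condition $\A$ gives $d(x_i) + d(x_j) \geq 3a$. The same argument applied to $y_i, y_j \in Y$ yields $d(y_i) + d(y_j) \geq 3a$, and adding the two inequalities gives
\[
d(x_i) + d(x_j) + d(y_i) + d(y_j) \geq 6a.
\]

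To finish, I would expand the left-hand side into its in- and out-degree components and separate out the terms I do not need, namely $d^-_D(x_i)$, $d^-_D(x_j)$, $d^+_D(y_i)$, $d^+_D(y_j)$. Each of these is at most $a-1$ by \eqref{eq:a-1}, so together they contribute at most $4a-4$. Subtracting, the remaining terms satisfy
\[
d^+_D(x_i) + d^+_D(x_j) + d^-_D(y_i) + d^-_D(y_j) \geq 6a - (4a-4) = 2a+4,
\]
which is exactly what was needed for $G_1$. The bound for $G_2$ follows by the symmetric computation: using the second line of \eqref{eq:D-to-G} one reduces to $d^+_D(y_i) + d^+_D(y_j) + d^-_D(x_i) + d^-_D(x_j) \geq 2a+4$, and now the discarded terms are $d^-_D(y_i)$, $d^-_D(y_j)$, $d^+_D(x_i)$, $d^+_D(x_j)$, again bounded in total by $4a-4$ via \eqref{eq:a-1}.

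The only genuinely delicate point—and the step I expect to matter most—is the decision to apply Lemma~\ref{lem:2} within each partite set separately and to sum the two resulting $3a$-bounds before trading against \eqref{eq:a-1}; the role of \eqref{eq:a-1} is precisely to ensure that the four degree terms absent from the $G_1$ (respectively $G_2$) count cannot absorb too much of the $6a$ total. Everything else is a routine rearrangement, so I anticipate no further obstacles.
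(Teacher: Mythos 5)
Your proof is correct and follows essentially the same route as the paper's: apply Lemma~\ref{lem:2} and condition $\A$ within each partite set to get the $6a$ total, discard the four terms $d^-_D(x_i),d^-_D(x_j),d^+_D(y_i),d^+_D(y_j)$ using \eqref{eq:a-1}, and transfer to $G_1$ via \eqref{eq:D-to-G} at a cost of $4$; the $G_2$ case is the symmetric computation. The only difference is cosmetic (you convert to $G_1$-degrees at the start rather than at the end).
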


\begin{proof}
Pick any $v_i$ and $v_j$ from $V(G_1)$, and consider the corresponding vertices $x_i,y_i$ and $x_j,y_j$ of $D$. 
By Lemma~\ref{lem:2} and condition $\A$, we have $d_D(x_i)+d_D(x_j)\geq3a$ and $d_D(y_i)+d_D(y_j)\geq3a$.
It follows that
\[
6a\leq (d_D(x_i)+d_D(x_j))+(d_D(y_i)+d_D(y_j))\,,
\]
and hence
\[
(d^+_D(x_i)+d^-_D(y_i))+(d^+_D(x_j)+d^-_D(y_j))\geq 6a-(d^-_D(x_i)+d^+_D(y_i)+d^-_D(x_j)+d^+_D(y_j))\,.
\]
By \eqref{eq:D-to-G}, the left hand side in the above inequality is less than or equal to $d_{G_1}(v_i)+d_{G_1}(v_j)+4$, and thus, by \eqref{eq:a-1}, we get
\[
d_{G_1}(v_i)+d_{G_1}(v_j)\geq 6a-4(a-1)-4=2a\,,
\]
as required. The proof for $G_2$ is analogous.
\end{proof}

\medskip

\section{Proof of the main result}
\label{sec:main-proof}

\subsubsection*{Proof of Theorem~\ref{thm:main}}
Let $D$ be a strongly connected balanced bipartite digraph with partite sets $X$ and $Y$ of cardinalities $a$, where $a\geq3$. Suppose that $D$ satisfies condition $\A$. Then, by Theorem~\ref{thm:A}, $D$ contains a cycle $C$ of length $2a$. Suppose that $D$ itself is not a cycle of length $2a$.

As in Section~\ref{sec:lemmas}, we will denote the vertices of $X$ and $Y$ by $\{x_1,\dots,x_a\}$ and $\{y_1,\dots,y_a\}$ respectively, and assume that $C$ is the cycle $[y_1,x_1,\dots,y_a,x_a]$.

Suppose first that condition \eqref{eq:a-1} is not satisfied in $D$. This means that there exists a vertex on the hamiltonian cycle $C$ which either dominates or is dominated by all the vertices of $D$ from the opposite partite set. Clearly, in this case $D$ contains cycles of all even lengths.
\smallskip

From now on we shall assume that $D$ satisfies condition \eqref{eq:a-1}.

Let $G_1$ and $G_2$ be the digraphs associated with $D$, costructed in Section~\ref{sec:lemmas}; i.e., $V(G_1)\coloneqq\{v_1,\dots,v_a\}$, with $v_iv_j\in A(G_1)$ whenever $x_iy_j\in A(D)$, and $V(G_2)\coloneqq\{w_1,\dots,w_a\}$, with $w_iw_j\in A(G_2)$ whenever $y_ix_j\in A(D)$, for $i,j\in\{1,\dots,a\}$, $i\neq j$. Then, $G_1$ is strongly connected because it contains a hamiltonian cycle $[v_1,\dots,v_a]$ (induced from $C$). By Lemma~\ref{lem:4}, it follows that $G_1$ satisfies the hypotheses of Theorem~\ref{thm:thomassen}.

Notice that every cycle $[v_{i_1},\dots,v_{i_l}]$ of length $l$ in $G_1$ corresponds to a cycle of length $2l$ in $D$, namely $[y_{i_1},x_{i_1},\dots,y_{i_l},x_{i_l}]$. Also, by Lemma~\ref{lem:3}, $D$ contains a cycle of length $2$. In light of Theorem~\ref{thm:thomassen}, to complete the proof it thus suffices to consider the cases when $G_1$ is a tournament, or $a$ is even and $G_1$ is isomorphic to $K^*_{\frac{a}{2},\frac{a}{2}}$.

First, suppose that $G_1$ is a tournament. Then, $G_1$ contains no cycle of length $2$, and hence
\[
d_{G_1}(v)=d^+_{G_1}(v)+d^-_{G_1}(v)\leq a-1\,,\quad\mathrm{for\ every\ } v\in V(G_1)\,.
\]
It follows that, for any two vertices $v_i,v_j\in V(G_1)$, we have $d_{G_1}(v_i)+d_{G_2}(v_j)\leq2a-2$, which contradicts Lemma~\ref{lem:4}.

Suppose then that $a$ is even and $G_1$ is isomorphic to $K^*_{\frac{a}{2},\frac{a}{2}}$. Since $G_1$ contains a hamiltonian cycle $[v_1,\dots,v_a]$, the two partite sets must be precisely $\{v_1,v_3,\dots,v_{a-1}\}$ and $\{v_2,v_4,\dots,v_a\}$. Moreover, we have $d^+_{G_1}(v_i)=\frac{a}{2}$ and $d^-_{G_1}(v_i)=\frac{a}{2}$, for every $v_i$ in $G_1$. Hence, by \eqref{eq:D-to-G},
\[
d^+_D(x_i)\leq\frac{a}{2}+1\quad\mathrm{and}\quad d^-_D(y_i)\leq\frac{a}{2}+1,\quad\mathrm{for\ all\ }1\leq i\leq a.
\]
Lemma~\ref{lem:2} and condition $\A$ then imply that, for any $i\neq j$,
\begin{multline}
\notag
6a\leq(d_D(x_i)+d_D(x_j))+(d_D(y_i)+d_D(y_j))=\\
(d^+_D(x_i)+d^-_D(y_i)+d^+_D(x_j)+d^-_D(y_j))+(d^-_D(x_i)+d^+_D(y_i)+d^-_D(x_j)+d^+_D(y_j))\leq\\
4(\frac{a}{2}+1)+(d^-_D(x_i)+d^+_D(y_i)+d^-_D(x_j)+d^+_D(y_j))\,,
\end{multline}
hence
\begin{equation}
\label{eq:4(a-1)}
d^-_D(x_i)+d^+_D(y_i)+d^-_D(x_j)+d^+_D(y_j)\geq 4(a-1)\,.
\end{equation}
If the above inequality is strict for at least one pair of indices $\{i,j\}$, then at least one of the vertices $x_i,y_i,x_j,y_j$ violates condition \eqref{eq:a-1}; a contradiction.

Suppose then that, for all $i\neq j$, we have equality in \eqref{eq:4(a-1)}. Then we must also have equalities in all the inequalities that led to it. In particular, for every $i\in\{1,\dots,a\}$, we have
\begin{equation}
\label{eq:key}
d^+_D(x_i)=\frac{a}{2}+1,\quad d^-_D(x_i)=a-1,\quad d^-_D(y_i)=\frac{a}{2}+1,\quad d^+_D(y_i)=a-1.
\end{equation}
Now, if there exists $i_0$ such that $x^+_{i_0}x_{i_0}\notin A(D)$ (where $x^+_{i_0}$ denotes the successor of $x_{i_0}$ on $C$), then $x_{i_0}$ is dominated by all other vertices from $Y$, by \eqref{eq:key}. In this case, $D$ clearly contains cycles of all even lengths greater than 3, and so $D$ is bipancyclic, by Lemma~\ref{lem:3}.

We may thus suppose that $x^+_ix_i\in A(D)$ for all $1\leq i\leq a$. Since $G_1$ is bipartite and $d^+_D(x_i)=\frac{a}{2}+1$, it follows that $x_iy_i\in A(D)$ for all $1\leq i\leq a$, and so $D$ contains a hamiltonian cycle $C'=[x_a,y_a,x_{a-1},y_{a-1},\dots,x_1,y_1]$. Consequently, $G_2$ is strongly connected as it contains the cycle $[w_a, w_{a-1},\dots,w_1]$ induced by $C'$. Repeating the preceding part of the proof for $G_2$ in place of $G_1$, we obtain that $D$ is bipancyclic unless $G_2$ is bipartite. In the latter case, we have $d^+_{G_2}(w_i)\leq\frac{a}{2}$ and $d^-_{G_2}(w_i)\leq\frac{a}{2}$, for every $w_i$ in $G_2$, hence, by \eqref{eq:D-to-G},
\[
d^+_D(y_i)\leq\frac{a}{2}+1\quad\mathrm{and}\quad d^-_D(x_i)\leq\frac{a}{2}+1,\quad\mathrm{for\ all\ }1\leq i\leq a.
\]
Lemma~\ref{lem:2} and condition $\A$ then imply that, for any $i\neq j$,
\begin{equation}
\label{eq:4(a-1)_again}
d^-_D(y_i)+d^+_D(x_i)+d^-_D(y_j)+d^+_D(x_j)\geq 4(a-1)\,.
\end{equation}
If the above inequality is strict for at least one pair of indices $\{i,j\}$, then at least one of the vertices $y_i,x_i,y_j,x_j$ violates condition \eqref{eq:a-1}; a contradiction. If, in turn, for all $i\neq j$, we have equality in \eqref{eq:4(a-1)_again}, then we must also have, for every $i\in\{1,\dots,a\}$,
\begin{equation}
\label{eq:key_again}
d^+_D(y_i)=\frac{a}{2}+1,\quad d^-_D(y_i)=a-1,\quad d^-_D(x_i)=\frac{a}{2}+1,\quad d^+_D(x_i)=a-1.
\end{equation}
Combining \eqref{eq:key} and \eqref{eq:key_again}, we get $\frac{a}{2}+1=a-1$, hence $a=4$.
However, when $a=4$ and $G_1$ is a bipartite digraph with partite sets $\{v_1,v_3\}$ and $\{v_2,v_4\}$, then \eqref{eq:key} implies that $x_2y_1\in A(D)$ and $x_4y_3\in A(D)$. The existence of cycles $C$ and $C'$ then implies that $D$ contains cycles $[x_1,y_2,x_2,y_1]$ and $[x_1,y_1,x_4,y_3,x_2,y_2]$. In light of Lemma~\ref{lem:3}, $D$ is thus bipancyclic, which completes the proof.
\qed

\medskip

\section*{Acknowledgments}
The author is grateful to an anonymous referee for spotting a critical mistake in an earlier version of the manuscript.

\medskip


\begin{thebibliography}{99}

\bibitem {A} J.\,Adamus,
 \textit{A degree sum condition for hamiltonicity in balanced bipartite digraphs}, Graphs Combin. \textbf{33} (2017), 43--51.

\bibitem {AAY} J.\,Adamus, L.\,Adamus and A.\,Yeo,
 \textit{On the Meyniel condition for hamiltonicity in bipartite digraphs}, Discrete Math. Theor. Comput. Sci. \textbf{16} (2014), 293--302.

\bibitem {BGY} J.\,Bang-Jensen, Y.\,Guo and A.\,Yeo,
 \textit{A new sufficient condition for a digraph to be Hamiltonian}, Discrete Appl. Math. \textbf{95} (1999), 61--72.

\bibitem {BG} J.\,Bang-Jensen and G.\,Gutin,
 ``Digraphs: Theory, Algorithms and Applications'', $2$nd edition, Springer, London, 2009.

\bibitem {BGL} J.\,Bang-Jensen, G.\,Gutin and H.\,Li,
 \textit{Sufficient conditions for a digraph to be Hamiltonian}, J. Graph Theory \textbf{22} (1996), 181--187.

\bibitem {DK} S.\,Darbinyan and I.\,Karapetyan,
 \textit{A sufficient condition for pre-Hamiltonian cycles in bipartite digraphs}, electronic preprint, \texttt{arXiv:1706.00233v1}.

\bibitem {T} C.\,Thomassen,
 \textit{An Ore-type condition implying a digraph to be pancyclic}, Discr. Math. \textbf{19} (1977), 85--92.

\bibitem {W} R.\,Wang,
 \textit{A sufficient condition for a balanced bipartite digraph to be hamiltonian}, Discrete Math. Theor. Comput. Sci. \textbf{19} (2017), no.\,3, Paper No.\,11, 12\,pp.

\end{thebibliography}
\end{document}